\newtheorem{prop}[theorem]{Proposition}
\newtheorem{cor}[theorem]{Corollary}
\newcommand{\Con}{\mathcal{C}}
\newcommand{\Vars}{\mathcal{P}}
\newcommand{\Frm}{\mathsf{Fm}}
\newcommand{\Rules}{\mathsf{R}}
\newcommand{\ruleR}{\mathsf{r}}
\newcommand{\MCon}{\mathcal{M}}
\newcommand{\bydef} {:=}                       
\newcommand{\means}{\leftrightharpoons}
\newcommand{\dimpl}{\Rightarrow}
\newcommand{\LogL}{\mathsf{L}}
\newcommand{\Int}{\mathsf{Int}}
\newcommand{\SF}{\mathsf{S4}}
\newcommand{\DP}{\mathsf{DP}}
\newcommand{\KF}{\mathsf{K4}}
\newcommand{\GL}{\mathsf{GL}}
\newcommand{\Grz}{\mathsf{Grz}}
\newcommand{\Dn}{\mathsf{D_n}}
\newcommand{\KM}{\mathsf{KM}}
\newcommand{\DS}{\mathsf{S}}
\newcommand{\one}{\mathbf{1}}
\newcommand{\zero}{\mathbf{0}}
\newcommand{\var}{\mathbb{V}}
\newcommand{\qvar}{\mathbb{Q}}
\newcommand{\univ}{\mathbb{U}}
\newcommand{\CPC}{\mathsf{CPC}}
\newcommand{\IPC}{\mathsf{IPC}}
\newcommand{\ML}{\mathsf{ML}}
\newcommand{\Heyt}{\mathcal{H}}
\newcommand{\Admm}{Adm}
\newcommand{\Adms}{Adm^{(1)}}
\newcommand{\Alg}[1]{\mathbf{#1}}
\newcommand{\class}[1]{\mathcal{#1}}
\newcommand{\set}[2]{\{#1 \mid #2\}}
\title{Multiple Conclusion Rules in Logics with the Disjunction Property}
\titlerunning{Multiple Conclusion Rules in Logics with the $\DP$}
\author{Alex Citkin}
\authorrunning{A. Citkin}
\institute{Metropolitan Telecommunications, New York}
\begin{document}

\maketitle
\begin{abstract} We prove that for the intermediate logics with the disjunction property any basis of admissible rules can be reduced to a basis of admissible m-rules (multiple-conclusion rules), and every basis of admissible m-rules can be reduced to a basis of admissible rules. These results can be generalized to a broad class of logics including positive logic and its extensions, Johansson logic, normal extensions of $\SF$, n-transitive logics and intuitionistic modal logics.
\end{abstract}

\section{Introduction}

The notion of admissible rule evolved from the notion of auxiliary rule: if in a given calculus (deductive system) $\DS$ a formula $B$ can be derived from a set of formulas $A_1,\dots,A_n$, one can shorten derivations by using a rule $A_1,\dots,A_n/B$. The application of such a rule does not extend the set of theorems, i.e. such a rule is admissible (permissible). In \cite[p.19]{Lorenzen_Book_1955} P.~Lorenzen called the rules not extending the class of the theorems "zul\"assing", and the latter term was translated as "admissible", the term we are using nowadays. In \cite{Lorenzen_Protologik_1956} Lorenzen also linked the admissibility of a rule to existence of an elimination procedure. 

Independently, P.S.~Novikov, in his lectures on mathematical logic, had introduced the notion of derived rule: a rule $A_1,\dots,A_n/B$ is derived in a calculus $\DS$ if $\vdash_\DS B$ holds every time when $\vdash_\DS A_1,\dots,\vdash_\DS A_n$ (see \cite[p. 30]{Novikov_Book}\footnote{This book was published in 1977, but it is based on the notes of a course that P.S.~Novikov taught in 1950th; A.V.~Kuznetsov was recalling that P.S.~Novikov had used the notion of derivable rule much earlier, in this lectures in 1940th.}). And he distinguished between two types of derived rules: a derived rule is strong, if $\vdash_\DS A_1 \to (A_2 \to \dots (A_n \to B) \dots)$ holds, otherwise a derived rule is weak.       

For classical propositional calculus ($\CPC$), the use of admissible rules is merely a matter of convenience, for every admissible for $\CPC$ rule $A_1,\dots,A_n/B$ is derivable, that is $A_1,\dots,A_n \vdash B$ (see, for instance \cite{Belnap_et_Strengthening_1963}). It was observed by R.~Harrop in \cite{Harrop_Concerning_1960} that the rule $\neg p \to (q \lor r)/(\neg p \to q) \lor (\neg p \to r)$ is admissible for the intuitionistic propositional calculus ($\IPC$), but is not derivable in $\IPC$. Later, in mid 1960th, A.V.~Kuznetsov observed that the rule $(\neg\neg p \to p) \to (p \lor \neg p)/((\neg\neg p \to p) \to \neg p) \lor ((\neg\neg p \to p) \to \neg\neg p)$ is also admissible for $\IPC$, but not derivable. Another example of an admissible for $\IPC$ not derivable rule was found in 1971 by G.~Mints (see \cite{Mints1971}). 

In 1974 A.V.~Kuznetsov asked whether admissible for $\IPC$ rules have a finite basis, that is, whether there is a finite set $\Rules$ of admissible for $\IPC$ rules such that every admissible for $\IPC$ rule can be derived from $\Rules$. Independently, in \cite[Problem 40]{Friedman_Hundred_1975} H.~Friedman asked whether the problem of admissibility for $\IPC$ is decidable, that is, whether there is a decision procedure that by a given rule $\ruleR$ decides whether $\ruleR$ is admissible for $\IPC$. Also, in \cite{Pogorzelski_Structural_1971} W.~Pogorzelski introduced a notion of structural completeness: as deductive system  $\DS$ is structurally complete if every admissible for $\DS$ structural rule is derivable in $\DS$. Thus, $\CPC$ is structurally complete, while $\IPC$ is not. Naturally, a question which intermediate logics are structurally complete has been posed. Thus, for intermediate logics, and, later, for modal and various types of propositional (and not only propositional) logics, for a given logic $\LogL$, first, we ask (a) whether $\LogL$ is structurally complete, that is, whether there are admissible for $\LogL$ not derivable rules; if $\LogL$ is not structurally complete, we ask (b) whether admissible for $\LogL$ rules have a finite, or at least recursive\footnote{Using idea from \cite{Craig_Axiomatizability_1953}, it is not hard to show that if a logic has a recursively enumerable explicit basis of admissible rules, it has a recursive basis.}, basis; or, at last, (c) whether a problem of admissibility for $\LogL$ is decidable\footnote{In \cite{Chagrov_Decidable_1992} A.~Chagrov has constructed a decidable modal logic having undecidable admissibility problem, and gave a negative answer to V.Rybakov's question \cite[Problem (1)]{Rybakov_Problems_1989}. The problem whether there exists a decidable intermediate logic with undecidable admissibility problem remains open.}.

It was established by V.~Rybakov (see \cite{Rybakov_Criterion_Adm_1984,Rybakov_Bases_Adm_1985}) that there is no finite basis of admissible for $\Int$ (and $\SF$) rules, i.e. Kuznetsov's question has a negative answer, but the problem of admissibility for $\Int$ (and $\SF$) is decidable, i.e. Friedman's problem has a positive answer. Later, using ideas from \cite{Rybakov_Criterion_Adm_1984,Rybakov_Bases_Adm_1985}, V.~Rybakov has constructed a basis of admissible rules for $\SF$ (see \cite{Rybakov_Construction_2001}). For $\Int$, P.~Rozi{\'e}re (see \cite{Roziere1992} and R.~Iemhoff (see \cite{Iemhoff2001}), using different techniques, have found a recursive basis of admissible rules. Using this technique, R.~Iemhoff has found the bases of admissible rules for different intermediate logics (see \cite{Iemhoff2006,Iemhoff2005}). Some very useful information on admissibility in intermediate logics as well as in modal logics can be found in the book \cite{Rybakov_Book}  by V.~Rybakov.  

In the review \cite{Kracht_Review_1999} on aforementioned book \cite{Rybakov_Book}, M.~Kracht suggested to study admissibility of multiple-conclusion rules: a rule $A_1,\dots,A_n/B_1,\dots,B_n$ is admissible for a logic $\LogL$ if every substitution that makes all the premises valid in $\LogL$, makes at least one conclusion valid in $\LogL$ (see also \cite{Kracht_Modal_2007}). A natural example of multiple-conclusion rule (called m-rule for short) admissible for $\IPC$ is the following rule, representing the disjunction property (DP for short): $\DP \bydef p \lor q/p,q$. That is, if a formula $A \lor B$ is valid in $\IPC$, then at least one of the formulas $A,B$ is valid in $\IPC$ (for more on disjunction property see \cite{Chagrov_Zakh_1991}). It was reasonable to ask the same questions regarding m-rules: whether a given logic has admissible, not derived m-rules, whether m-rules have a finite or recursively enumerable basis, or whether the admissibility of m-rules is decidable. The bases of m-rules for a variety of intermediate and normal modal logics were constructed in \cite{Jerabek_Independent_2008,Jerabek_Canonical_2009,Goudsmit_Iemhoff_Unification_2014,Goudsmit_Note_2013,Goudsmit_PhD}.
 
For logics with the DP, there is a close relation between m-rules and rules: with each m-rule $\ruleR \bydef \Gamma/\Delta$ one can associate a rule $\ruleR^q \bydef \bigwedge\Gamma \lor q/\bigvee\Delta \lor q$, where variable $q$ does not occur in formulas from $\Gamma,\Delta$. Our goal is to prove that if m-rules $\ruleR_i, i\in I$ form a basis of m-rules admissible for a given intermediate logic $\LogL$ with the DP, then rules $\ruleR^q, i \in I$ form a basis of rules admissible for $\LogL$ (comp. \cite[Theorem 3.1]{Jerabek_Independent_2008}). To prove this, we will use the main theorem from \cite{Citkin_2011}. As a consequence, we obtain that for intermediate logics with the DP, each of the mentioned above problems for the m-rules and rules are equivalent. In the last section, we will discuss how this result can be extended beyond intermediate logics. In order to extend the results from intermediate logics to normal extension of $\SF$, we are not using G\"odel-McKinsey-Tarski translation; instead, we make a use of some common properties of the algebraic models (Heyting algebras and $\SF$-algebras), and this gives us an ability to extend the results even further.

\section{Background.}

\subsection{Multiple-Conclusion Rules.}
We consider (propositional) formulas built in a usual way from the propositional variables from a countable set $\Vars$ and connectives from a finite set $\Con$. By $\Frm$ we denote the set of all formulas, and by $\Sigma$ we denote the set of all substitutions, that is the set of all mappings $\sigma:\Vars \to \Frm$. In a natural way, every substitution $\sigma$ can be extended to a mapping $\Frm \to \Frm$.

A \textit{multiple-conclusion rule} (m-rule for short) is an ordered pair of finite sets of formulas $\Gamma,\Delta \subseteq \Frm$ written as  $\Gamma/\Delta$;  $\Gamma$ is a set of \textit{premises}, and $\Delta$ is a set of \textit{conclusions}. A \textit{rule} is an m-rule, that has the set of conclusions consisting of a single formula. 

A \textit{structural multiple-conclusion consequence relation} (m-consequence for short) is a binary relation $\vdash$ between finite sets of formulas for which the following holds: for any formula $A \in \Frm$ and any finite sets of formulas $\Gamma,\Gamma',\Delta,\Delta' \subseteq \Frm$
\begin{itemize}
\item[(R)] $A \vdash A$;  
\item[(M)] if $\Gamma \vdash \Delta$, then $\Gamma \cup \Gamma' \vdash \Delta \cup \Delta'$; 
\item[(T)] if $\Gamma,A \vdash\Delta$ and $\Gamma'\vdash A,\Delta'$, then $\Gamma \cup \Gamma' \vdash \Delta \cup \Delta'$;
\item[(S)] if $\Gamma \vdash \Delta$, then $\sigma(\Gamma) \vdash \sigma(\Delta)$ for each substitution $\sigma \in \Sigma$. 
\end{itemize}
A class of all m-consequences will be denoted by $\MCon$.

Let $\vdash$ be an m-consequence  and $\ruleR \bydef \Gamma/\Delta$ be an m-rule. An m-rule $\ruleR $ is \textit{derivable} w.r.t. $\vdash$ (in written $\vdash \ruleR$), if $\Gamma \vdash \Delta$.

Every collection $\Rules$ of m-rules defines an m-consequence $\vdash_\Rules$, namely, the least m-consequence relative to which every rule from $\Rules$ is derivable:
\[
\vdash_\Rules \bydef \bigcap\set{\vdash \ \in \MCon}{\vdash \ruleR \text{ for every } \ruleR \in \Rules}.
\]  

An m-rule $\ruleR$ is said to be \textit{derivable from a set of m-rules} $\Rules$ (in written $\Rules \vdash \ruleR$), if $\vdash_\Rules \ruleR$.  

Every m-consequence $\vdash$ defines a logic $\LogL(\vdash) \means \set{A \in \Frm}{\vdash A}$. If $\LogL$ is a logic, an m-rule $\Gamma/\Delta$ is said to be \textit{admissible} for $\LogL$ if for every substitution $\sigma \in \Sigma$
\[
\sigma(\Gamma) \subseteq \LogL \text{ entails } \sigma(\Delta) \cap \LogL \neq \emptyset. 
\]
If $\LogL$ is a logic, by $\Admm(\LogL)$ we denote a set of all m-rules admissible for $\LogL$, and by $\Adms(\LogL)$ we denote a set of all rules admissible for $\LogL$.

Given a logic $\LogL$, a set of m-rules $\Rules$ forms an \textit{basis of admissible m-rules} (m-basis for short), if every rule $\ruleR \in \Admm(\LogL)$ is derivable from $\Rules$; and a set of rules $\Rules$ forms a \textit{basis of admissible rules} (s-basis for short), if every rule $\ruleR \in \Adms(\LogL)$ is derivable from $\Rules$.  

\subsection{Algebraic Semantics.}

\subsubsection{Basic Definitions.}

Algebraic models for intermediate logics are Heyting algebras, that is algebras $\langle \mathsf{A}; \land,\lor,\to,\neg,\one,\zero \rangle$, where $\langle \mathsf{A}; \land,\lor,\one,\zero \rangle$ is a bounded distributive lattice, and $\to,\neg$ are respectively a relative pseudo-complement and a pseudo-complement. 

Let $\Alg{A}$ be a (Heyting) algebra, $A$ be a formula, $\ruleR' \bydef A_1,\dots,A_n/B$ be a rule and $\ruleR \bydef A_1,\dots,A_n/B_1,\dots,B_m$ be an m-rule. \textit{A formula $A$ is valid in a (Heyting) algebra} $\Alg{A}$ (in written, $\Alg{A} \models A$) if for every assignment $\nu: \Vars \to \Alg{A}$ the value $\nu(A)$, that is, the value obtained by interpreting the connectives by operations of $\Alg{A}$, is $\one$. Accordingly, \textit{rule $\ruleR'$ is valid in }$\Alg{A}$ (in written, $\Alg{A} \models \ruleR'$), if for every assignment $\nu$, if $\nu(A_1) = \dots = \nu(A_n) = \one$ yields $\nu(B) = \one$. And m-rule $\ruleR$ if for every assignment $\nu$, $\nu(A_1) = \dots = \nu(A_n) = \one$ yields that at least for some $j =1,\dots,m$, $\nu(B_j) =  \one$. 

Let $\class{K}$ be a set of algebras. If $F$  is a family of formulas ($\Rules'$ is a family of rules, or $\Rules$ is a family of m-rules), then by $\class{K} \models F$ ($\class{K} \models \Rules'$ or $\class{K} \models \Rules$) we mean that every formula (rule of m-rule) is valid in each algebra $\Alg{A} \in \class{K}$.

Immediately from the definition of validity of rule, we have the following:
\begin{prop} \label{propprod} Let $\ruleR$ be a rule and $\Alg{A}_i,i \in I$ be a family of algebras. Then, $\Alg{A}_i \models A$ for all $i \in I$ if and only if $\prod_{i\in I}\Alg{A}_i \models \ruleR$. 
\end{prop}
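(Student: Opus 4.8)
The plan is to reduce everything to the elementary fact that an assignment $\nu$ into the product $\Alg{P} \bydef \prod_{i\in I}\Alg{A}_i$ is nothing but a family of factor assignments $\nu_i$ glued coordinatewise by $\nu(p) = (\nu_i(p))_{i\in I}$. Since all operations of $\Alg{P}$ act coordinatewise, a routine induction on the structure of a formula $C$ gives $\nu(C) = (\nu_i(C))_{i\in I}$; in particular, because the top of $\Alg{P}$ is the tuple $(\one_i)_{i\in I}$ of the tops $\one_i$ of the factors, we have $\nu(C) = \one$ in $\Alg{P}$ if and only if $\nu_i(C) = \one_i$ in $\Alg{A}_i$ for every $i \in I$. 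This coordinatewise description of the truth of $C = \one$ is the one computational lemma I would establish first.

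Granting the lemma, the implication from the factors to the product is immediate. Write $\ruleR \bydef A_1,\dots,A_n/B$, assume $\Alg{A}_i \models \ruleR$ for all $i$, and let $\nu$ be an assignment into $\Alg{P}$ with $\nu(A_1)=\dots=\nu(A_n)=\one$. By the lemma each factor assignment satisfies $\nu_i(A_1)=\dots=\nu_i(A_n)=\one_i$, hence $\nu_i(B)=\one_i$ since $\ruleR$ is valid in $\Alg{A}_i$; applying the lemma once more gives $\nu(B)=\one$, so $\Alg{P}\models \ruleR$.

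The converse — validity in the product forcing validity in each factor — is the only step that is not purely formal, and it is where I expect the real work to sit. Fix $i_0$ and an assignment $\mu$ into $\Alg{A}_{i_0}$ with $\mu(A_1)=\dots=\mu(A_n)=\one_{i_0}$; I must derive $\mu(B)=\one_{i_0}$ from $\Alg{P}\models\ruleR$. The natural move is to lift $\mu$ to an assignment $\nu$ on $\Alg{P}$ whose $i_0$-coordinate is $\mu$ and then read off the $i_0$-coordinate of $\nu(B)$; but to invoke validity in $\Alg{P}$ I need the premises to hold in \emph{every} coordinate, i.e. I need $\{A_1,\dots,A_n\}$ to be satisfiable in each factor, not just in $\Alg{A}_{i_0}$. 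This is the crux and the main obstacle. I would clear it by the observation that satisfiability of a finite set of formulas in a nontrivial Heyting algebra is independent of the algebra: the two-element Boolean algebra $\mathbf{2}$ embeds into every nontrivial factor, while $\Alg{A}_{i_0}$ admits a homomorphism onto $\mathbf{2}$, so a solution of $A_1=\dots=A_n=\one$ in $\Alg{A}_{i_0}$ pushes down to a $\{\zero,\one\}$-solution and then embeds into every $\Alg{A}_i$ (trivial one-element factors satisfy the premises automatically and may be ignored). Equivalently, these two homomorphisms assemble a section of the projection $\pi_{i_0}$, exhibiting $\Alg{A}_{i_0}$ as a retract, hence up to isomorphism a subalgebra, of $\Alg{P}$, and validity of a rule descends to subalgebras. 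Either way, once the premises hold simultaneously in all coordinates the lifted $\nu$ has $\nu(A_j)=\one$ for all $j$, validity in $\Alg{P}$ yields $\nu(B)=\one$, and its $i_0$-coordinate gives $\mu(B)=\one_{i_0}$, as required.
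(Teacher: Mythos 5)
Your proof is correct, but it does considerably more than the paper, which offers no argument at all: Proposition~\ref{propprod} is introduced there with the words ``immediately from the definition of validity of rule'' and nothing further. For the direction from the factors to the product, your coordinatewise lemma is exactly the intended (and genuinely immediate) argument. The converse is where you diverge, and rightly so: you correctly observe that it is \emph{not} purely formal, since a factor is a priori only a homomorphic image of the product, and quasi-identities need not descend along surjections. Your repair --- composing a surjection $\Alg{A}_{i_0}\twoheadrightarrow\mathbf{2}$ (which every nontrivial Heyting algebra admits, via a maximal filter) with the embedding $\mathbf{2}\hookrightarrow\Alg{A}_i$ to assemble a section of the projection $\pi_{i_0}$, exhibiting $\Alg{A}_{i_0}$ as a retract, hence a subalgebra, of the product --- is sound, and it is the standard way to make the ``only if'' half true; note that it exploits the specific structure of Heyting algebras (and Zorn's lemma) and would fail for arbitrary similarity types, so the paper's claim of immediacy is really only defensible for the easy half. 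It is worth adding that the paper only ever invokes that easy half (in the proof of Theorem~\ref{thmind}, to pass from $\Alg{F}_{\class{V}}\models\Rules^q$ to $\Alg{F}_{\class{V}}^{2}\models\Rules^q$), so your extra work, while needed to justify the biconditional as literally stated, is not load-bearing for the rest of the paper.
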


Let us observe that for m-rules the situation is quite different: if $\Alg{A}$ is a two-element Boolean algebra, then $\Alg{A} \models \DP$, but $\Alg{A}^2 \not\models \DP$.

It is not hard to see that any set of formulas $F$ defines a variety $\var(F) \means \set{\Alg{A}}{\Alg{A} \models F}$; any set $\Rules'$ of rules defines a quasivariety $\qvar(\Rules) = \set{\Alg{A}}{\Alg{A} \models \Rules'}$; any set $\Rules$ of m-rules defines a universal class $\univ(\Rules) = \set{\Alg{A}}{\Alg{A} \models \Rules'}$). 

On the other hand, if $\class{K}$ is a family of algebras, by $\var(\class{K})$, $\qvar(\class{K})$ and $\univ(\class{K})$, we denote respectively a variety, quasivariety and universal class generated by algebras $\class{K}$.

There is 1-1-correspondence between intermediate logics and non-trivial varieties of Heyting algebras. Moreover, there is 1-1correspondence between consequence relations and subquasivarieties of $\Heyt$, and between m-consequences and universal subclasses of $\Heyt$ (see, for instance, \cite{Cabrer_Metcalfe_Admissibility_2015}). If $\class{V}$ is a variety corresponding to a logic $\LogL$, then a formula $A$ is valid in $\LogL$ (a rule $\ruleR'$ is admissible for $\LogL$, or an m-rule $\ruleR$ is m-admissible for $\LogL$) if and only if $\Alg{F}_\class{V} \models A$ (accordingly $\Alg{F}_\class{V} \models \ruleR'$, or $\Alg{F}_\class{V} \models \ruleR$.  

Let us note the following important property: if $\Rules$ is a set of m-rules (or rules) and $\ruleR$ is an m-rule (or a rule), then $\Rules \vdash \ruleR$ if and only if 
\begin{equation}
\Alg{A} \models \Rules \text{ entails } \Alg{A} \models \ruleR \text{ for every algebra } \Alg{A} \in \Heyt. \label{algrulent}
\end{equation}

\subsubsection{Well-connected Algebras.}

An algebra $\Alg{A}$ is called \textit{well-connected}, if for every $\Alg{a},\Alg{b} \in \Alg{A}$, if $\Alg{a} \lor \Alg{b} = \one$, then $\Alg{a} = \one$ or $\Alg{b} = \one$.

The class of all Heyting algebras forms a variety $\Heyt$, and the free algebras of $\Heyt$ are well-connected.

\begin{prop}\label{seqprop} Let $\Alg{A}$ be a well-connected algebra, $\Gamma/\Delta$ be an m-rule and $q$ be a variable not occurring in $\Gamma/\Delta$. Then the following is equivalent
\begin{itemize}
\item[(a)] $\Alg{A} \models \Gamma/\Delta$;
\item[(b)] $\Alg{A} \models \Gamma/\bigvee_{B \in \Delta}B$;
\item[(c)] $\Alg{A} \models \bigwedge_{A \in \Gamma}A \lor q/\bigvee_{B \in \Delta}B \lor q$;
\end{itemize} 
\end{prop}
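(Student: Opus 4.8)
The plan is to fix an arbitrary assignment $\nu:\Vars\to\Alg{A}$ and introduce the abbreviations $\gamma\bydef\bigwedge_{A\in\Gamma}\nu(A)$, $\delta\bydef\bigvee_{B\in\Delta}\nu(B)$ and $c\bydef\nu(q)$. The single elementary fact I would record at the outset is that, in any bounded lattice, $\gamma=\one$ holds precisely when $\nu(A)=\one$ for every $A\in\Gamma$, so that the hypothesis ``all premises evaluate to $\one$'' is literally the equation $\gamma=\one$ (this covers the empty case $\Gamma=\emptyset$, where $\gamma=\one$ always). In these terms the three clauses read: (a) $\gamma=\one$ implies $\nu(B)=\one$ for some $B\in\Delta$; (b) $\gamma=\one$ implies $\delta=\one$; and (c) for every $\nu$, $\gamma\lor c=\one$ implies $\delta\lor c=\one$. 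I would then prove the cycle (a)$\Rightarrow$(b)$\Rightarrow$(c)$\Rightarrow$(a), since with this reading each individual implication is short.

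The implication (a)$\Rightarrow$(b) requires nothing: if $\nu(B)=\one$ for some $B\in\Delta$, then $\delta\ge\nu(B)=\one$. The heart of the argument is (b)$\Rightarrow$(c), and this is exactly where well-connectedness enters. Given an assignment with $\gamma\lor c=\one$, I would apply well-connectedness to the two elements $\gamma,c\in\Alg{A}$ to split into cases: either $c=\one$, in which case $\delta\lor c=\one$ is immediate, or $\gamma=\one$, in which case (b) gives $\delta=\one$ and hence again $\delta\lor c=\one$. Thus the conclusion of (c) holds in either case.

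For (c)$\Rightarrow$(a) I would exploit that $q$ is fresh. Starting from an assignment $\nu$ with $\gamma=\one$, I replace the value of $q$ by $\zero$, obtaining $\nu'$ that agrees with $\nu$ on all variables occurring in $\Gamma$ and $\Delta$; since $x\lor\zero=x$, clause (c) applied to $\nu'$ turns the hypothesis $\gamma\lor\zero=\gamma=\one$ into $\delta\lor\zero=\delta=\one$. It then remains to pass from $\delta=\one$, i.e.\ $\bigvee_{B\in\Delta}\nu(B)=\one$, to the existence of a single $B$ with $\nu(B)=\one$; this is a second use of well-connectedness, carried out by induction on $|\Delta|$, the binary definition being applied to $\nu(B)$ and the join of the remaining conclusions. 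The only genuine subtlety I anticipate is bookkeeping rather than mathematics: I must treat the degenerate case $\Delta=\emptyset$ (reading $\bigvee\emptyset=\zero$) separately, where in a nontrivial $\Alg{A}$ all three clauses reduce to the statement that $\gamma=\one$ is impossible, and I must check that the modification $q\mapsto\zero$ of $\nu$ is harmless, which is exactly guaranteed by the hypothesis that $q$ does not occur in $\Gamma/\Delta$. I expect the two appeals to well-connectedness — the case split on $\gamma\lor c=\one$ and the induction reducing a top join to a single top disjunct — to be the only places where the hypothesis on $\Alg{A}$ is indispensable, and this is precisely what makes the equivalence of (a) and (b) fail for algebras that are not well-connected.
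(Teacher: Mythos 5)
Your proof is correct and follows essentially the same route as the paper: well-connectedness is invoked in exactly the same two places (the case split on $\bigwedge_{A\in\Gamma}\nu(A)\lor\nu(q)=\one$, and the passage from $\bigvee_{B\in\Delta}\nu(B)=\one$ to a single $\nu(B)=\one$), and the freshness of $q$ is used via the same substitution $q\mapsto\zero$. The only differences are organizational (a single cycle (a)$\Rightarrow$(b)$\Rightarrow$(c)$\Rightarrow$(a) instead of two pairwise equivalences, and a direct rather than contrapositive argument for (b)$\Rightarrow$(c)), plus your explicit treatment of $\Delta=\emptyset$, which the paper leaves implicit.
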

\begin{proof}
(a) $\dimpl$ (b) is trivial. 

(b) $\dimpl$ (a) due to well-connectedness of $\Alg{A}$.

(b) $\dimpl$ (c). Suppose $\Alg{A} \not\models \bigwedge_{A \in \Gamma}A \lor q/\bigvee_{B \in \Delta}B \lor q$. We need to prove that $\Alg{A} \not\models \Gamma/\bigvee_{B \in \Delta}B$. 

Indeed, let $\nu$ be a refuting valuation, that is
\begin{equation}
\bigwedge_{A \in \Gamma}\nu(A) \lor \nu(q) = \one_\Alg{A} \text{ while } \bigvee_{B \in \Delta}\nu(B)  \lor \nu(q) \neq \one_\Alg{A}. \label{seqprop1}
\end{equation}
Then, clearly,
\begin{equation}
\bigvee_{B \in \Delta}\nu(B) \neq \one_\Alg{A} \label{seqprop2}
\end{equation}
and
\begin{equation}
\nu(q) \neq \one_\Alg{A}. \label{seqprop3}
\end{equation}
Due to well-connectedness of $\Alg{A}$, from \eqref{seqprop1} and \eqref{seqprop3} we have
\begin{equation}
\bigwedge_{A \in \Gamma}\nu(A) = \one_\Alg{A} \label{seqprop4}.
\end{equation}
And \eqref{seqprop4} together with \eqref{seqprop2} mean that $\nu$ is a refuting valuation from $\Gamma/\bigvee_{B \in \Delta} B$, that is,  $\Alg{A} \not\models \Gamma/\bigvee_{B \in \Delta} B$.

(c) $\dimpl$ (b). Since $q$ does not occur in the formulas from $\Gamma,\Delta$, we can substitule $q$ with $\zero_\Alg{A}$  and reduce (c) to (b).
\end{proof}

The above Proposition can be restated in the following way:

\begin{cor} \label{cseqprop} Let $\Alg{A}$ be a well-connected algebra, $\ruleR$ be an m-rule and $q$ be a variable not occurring in $\ruleR$. Then $\Alg{A} \models \ruleR$ if and only if $\Alg{A} \models \ruleR^q$.
\end{cor}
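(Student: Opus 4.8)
The plan is simply to unwind the notation and invoke Proposition~\ref{seqprop}, since the corollary is nothing more than a repackaging of that proposition. Writing $\ruleR$ as $\Gamma/\Delta$, the associated rule $\ruleR^q$ is by definition $\bigwedge_{A \in \Gamma}A \lor q/\bigvee_{B \in \Delta}B \lor q$. I observe that the assertion $\Alg{A} \models \ruleR$ is verbatim clause (a) of Proposition~\ref{seqprop}, while $\Alg{A} \models \ruleR^q$ is verbatim clause (c). Hence the equivalence claimed in the corollary is precisely the equivalence (a) $\dimpl$ (c) together with its converse, which the proposition already supplies through the chain (a) $\Leftrightarrow$ (b) $\Leftrightarrow$ (c).

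The only point that needs checking is that the hypotheses line up, and they do: the corollary assumes exactly that $\Alg{A}$ is well-connected and that $q$ does not occur in $\ruleR = \Gamma/\Delta$, which are the standing assumptions of Proposition~\ref{seqprop}. I would therefore record the corollary in essentially one line, as an immediate consequence of the proposition after substituting the abbreviations $\ruleR$ and $\ruleR^q$ for the two extreme clauses.

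I expect no genuine obstacle here, because all the mathematical content lives in Proposition~\ref{seqprop}. There the nontrivial steps are (b) $\dimpl$ (a) and (b) $\dimpl$ (c), both of which lean on well-connectedness of $\Alg{A}$: the former directly from the defining property $\Alg{a} \lor \Alg{b} = \one \Rightarrow \Alg{a} = \one$ or $\Alg{b} = \one$, and the latter through the refuting-valuation argument that uses the same property to pass from \eqref{seqprop1} and \eqref{seqprop3} to \eqref{seqprop4}. Since those implications are already in hand, the corollary follows with no additional reasoning.
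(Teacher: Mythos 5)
Your proposal is correct and matches the paper exactly: the paper offers no separate argument for the corollary, presenting it as a direct restatement of Proposition~\ref{seqprop} with $\Alg{A} \models \ruleR$ being clause (a) and $\Alg{A} \models \ruleR^q$ being clause (c). Your identification of the clauses and check that the hypotheses coincide is precisely what is needed.
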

\section{The Case of Intermediate Logics.}

In this section we prove that for the intermediate logics with the disjunction property, any basis of admissible rules can be reduced to a basis of admissible m-rules (multiple-conclusion rules), and every basis of admissible m-rules can be reduced to a basis of admissible rules. 

\subsection{Reductions.} \label{redadm}

We consider formulas in the signature $\land, \lor, \to, \neg, \bot, \top$. \textit{Intermediate logic} is understood as a set of formulas $\LogL$ such that $\Int \subseteq \LogL \subset \Frm$ and closed under Modus Ponens. By $\vdash_\Int$ we denote a consequences relation defined by intuitionistic axiom schemata and the rule Modus Ponens. In this section we consider only m-consequences extending  $\vdash_\Int$ and defining intermediate logics. Clearly, for each intermediate logic $\LogL$ there is an m-consequence defining it: one can take a consequence relation that is defined by $\LogL$ (viewed as a set of axiom schemata) and by Modus Ponens. 

A (intermediate) logic $\LogL$ enjoys the \textit{disjunction property} (DP for short) if $ (A\lor B) \in \LogL$ yields $A \in \LogL$ or $B \in \LogL$ for any formulas $A,B$. It is clear that $\LogL$ has the DP if and only if m-rule 
\[
\DP \bydef p \lor q/p,q
\] is admissible for $\LogL$.

\begin{definition} Let $\ruleR \bydef \Gamma/\Delta$ be an m-rule. The following rule is called a \textit{reduction of rule} $\ruleR$:
\begin{equation} 
\ruleR^\circ \means \bigwedge_{A \in \Gamma}A/\bigvee_{B \in \Delta}B, \label{reduction}
\end{equation}
where $\bigwedge_{A \in \Gamma}A = \top$, if $\Gamma = \emptyset$, and $\bigvee_{B \in \Delta}B = \bot$, if $\Delta = \emptyset$.
\end{definition}
Note, that the rule $\ruleR^\circ$ is always a single-conclusion rule.

As we know, rule $\DP$ expresses the DP and the following holds: 

\begin{prop} \label{propdp} Let $\Rules$ be a set of rules from which $\DP$ is derived, and $\ruleR \bydef \Gamma/\Delta$ be an m-rule. Then
\begin{equation}
\Rules \vdash \ruleR \text{ if and only if } \Rules \vdash \ruleR^\circ. \label{propdp0}
\end{equation}
\end{prop}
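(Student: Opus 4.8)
The plan is to pass to the algebraic side via the criterion \eqref{algrulent} and to exploit the fact that any algebra validating $\Rules$ is forced to be well-connected. First, by \eqref{algrulent}, $\Rules \vdash \ruleR$ holds if and only if every algebra $\Alg{A} \in \Heyt$ with $\Alg{A} \models \Rules$ satisfies $\Alg{A} \models \ruleR$, and likewise $\Rules \vdash \ruleR^\circ$ holds if and only if every such $\Alg{A}$ satisfies $\Alg{A} \models \ruleR^\circ$. Hence the proposition reduces to the per-algebra statement that, for each fixed $\Alg{A} \in \Heyt$ with $\Alg{A} \models \Rules$, one has $\Alg{A} \models \ruleR \Leftrightarrow \Alg{A} \models \ruleR^\circ$.

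The key observation is that the hypothesis $\Alg{A} \models \Rules$ already supplies well-connectedness. Since $\DP$ is derived from $\Rules$, that is $\Rules \vdash \DP$, a further application of \eqref{algrulent} gives $\Alg{A} \models \DP$. Unwinding the definition of validity for $\DP \bydef p \lor q/p,q$, this says precisely that $\Alg{a} \lor \Alg{b} = \one$ entails $\Alg{a} = \one$ or $\Alg{b} = \one$ for all $\Alg{a},\Alg{b} \in \Alg{A}$; in other words, $\Alg{A}$ is well-connected.

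With well-connectedness in hand I would invoke Proposition \ref{seqprop}, whose equivalence (a) $\Leftrightarrow$ (b) gives $\Alg{A} \models \Gamma/\Delta$ iff $\Alg{A} \models \Gamma/\bigvee_{B \in \Delta} B$. It then remains only to replace the premise set $\Gamma$ by the single conjunction $\bigwedge_{A \in \Gamma} A$: since $\nu(\bigwedge_{A \in \Gamma} A) = \bigwedge_{A \in \Gamma} \nu(A)$ in any Heyting algebra, and a meet equals $\one$ exactly when every factor does, an assignment $\nu$ makes every formula of $\Gamma$ equal to $\one$ if and only if it makes $\bigwedge_{A \in \Gamma} A$ equal to $\one$. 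Consequently $\Alg{A} \models \Gamma/\bigvee_{B \in \Delta} B$ iff $\Alg{A} \models \bigwedge_{A \in \Gamma} A/\bigvee_{B \in \Delta} B$, and the latter is exactly $\Alg{A} \models \ruleR^\circ$. Chaining these equivalences yields $\Alg{A} \models \ruleR \Leftrightarrow \Alg{A} \models \ruleR^\circ$, which is what the reduction to the per-algebra statement required.

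I do not anticipate a genuine obstacle; the only points demanding care are bookkeeping ones. The degenerate cases built into the definition of $\ruleR^\circ$ must be dispatched: if $\Gamma = \emptyset$ the conjunction is $\top$, always assigned $\one$, and if $\Delta = \emptyset$ the disjunction is $\bot$, and in each case one checks directly that the stated convention matches the defining clause for validity of an m-rule with empty premise or conclusion set. The one substantive thing to keep straight is that the conjunction-of-premises step and the application of Proposition \ref{seqprop} are carried out on the same well-connected $\Alg{A}$; this is automatic, because both instances of \eqref{algrulent} quantify over precisely the algebras validating $\Rules$, all of which are well-connected by the argument above.
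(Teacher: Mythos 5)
Your proof is correct in substance but takes a genuinely different route from the paper's. The paper argues purely syntactically, inside the m-consequence $\vdash_\Rules$: for ($\Rightarrow$) it merges the conclusions in $\Delta$ two at a time into a disjunction, using $B_1 \vdash_\Int B_1\lor B_2$ and $B_2 \vdash_\Int B_1\lor B_2$ together with the closure conditions (M) and (T) and an induction on the cardinality of $\Delta$; for ($\Leftarrow$) it unfolds $\bigvee_{B\in\Delta}B$ back into the set $\Delta$ by instances of $\DP$. You instead pass to the algebraic side via the completeness statement \eqref{algrulent}, observe that $\Rules\vdash\DP$ forces every algebra validating $\Rules$ to be well-connected, and then invoke Proposition \ref{seqprop} (a)$\Leftrightarrow$(b) plus the trivial replacement of the premise set by its conjunction. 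Both arguments work; yours is shorter and reuses machinery the paper develops anyway --- indeed it is essentially the technique the paper itself deploys later in the proof of Theorem \ref{mthm}(b) --- but it rests on \eqref{algrulent}, which the paper states without proof, whereas the paper's syntactic derivation is self-contained at the level of the consequence relation and would survive in settings where algebraic completeness has not yet been established. One shared wrinkle concerns $\Delta=\emptyset$: the degenerate one-element algebra validates $\DP$ and validates $\bigwedge_{A\in\Gamma}A/\bot$ while failing $\Gamma/\emptyset$, so your claim that the empty-conclusion convention "matches the defining clause" does not quite go through on that algebra; the paper's own proof of ($\Leftarrow$) has exactly the same blind spot (it asserts $\bigvee_{B\in\Delta}B\vdash_\Rules\Delta$ from $\DP$, which for empty $\Delta$ would require $\bot\vdash_\Rules\emptyset$), so this edge case does not distinguish the two arguments.
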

\begin{proof} ($\Rightarrow$) Suppose $\Rules \vdash \ruleR$ , that is, $\Gamma \vdash_\Rules \Delta$. We need to prove $\Rules \vdash \ruleR^\circ$, that is, we need to show that $\bigwedge_{A \in \Gamma}A \vdash_\Rules \bigvee_{B \in \Delta} B$.

If $\Delta = \emptyset$, then $\Gamma \vdash_\Rules \emptyset$ yields $\Gamma \vdash_\Rules \bot$, because $\vdash_\Rules$ is closed under (M). In its turn, $\Gamma \vdash_\Rules \bot$ entails $\bigwedge_{A \in \Gamma}A \vdash_\Rules \bot$, for $\Gamma \vdash_\Int \bigwedge_{A \in \Gamma}A$, and $\vdash_\Int \ \subseteq \ \vdash_\Rules$. 

The case $\Delta = \{B\}$ is trivial.

Suppose $\Delta = \{B_1,\dots,B_n,B_{n+1}\}$. Let us prove that
\begin{equation}
\Gamma \vdash_\Rules B_1,B_2,\Delta' \text{ yields } \Gamma \vdash_\Rules B_1 \lor B_2,\Delta' \label{propdp1} 
\end{equation} 
and then one can complete the proof  of ($\Rightarrow$) by induction on cardinality of $\Delta$.

Assume
\begin{equation}
\Gamma \vdash_\Rules B_1,B_2,\Delta'. \label{propdp2} 
\end{equation} 
Let us observe that $B_1 \vdash_\Int B_1 \lor B_2$, and, by (M), we have $B_1 \vdash_\Int B_1 \lor B_2, B_2,\Delta'$. Since $\vdash_\Int \  \subseteq \ \vdash_\Rules$, we can conclude that
\begin{equation}
B_1 \vdash_\Rules B_1 \lor B_2, B_2, \Delta'. \label{propdp3}
\end{equation}
From \eqref{propdp2} and \eqref{propdp3} by (T) we have  
\begin{equation}
\Gamma \vdash_\Rules B_1 \lor B_2, B_2,\Delta'. \label{propdp4}
\end{equation}
Now, we use $B_2 \vdash_\Int B_1 \lor B_2$, and by (M) and $\vdash_\Int \ \subseteq \ \vdash_\Rules$ we get
\begin{equation}
B_2 \vdash_\Rules B_1 \lor B_2, \Delta'. \label{propdp5}      
\end{equation}
And from \eqref{propdp4}  and \eqref{propdp5} by (T) we obtain 
\begin{equation}
\Gamma \vdash_\Rules B_1 \lor B_2, \Delta',
\end{equation} 
and this completes the proof of $\dimpl$.

Proof of ($\Leftarrow$). Suppose $R \vdash r^\circ$, i.e. $\bigwedge_{A \in \Gamma}A \vdash_\Rules \bigvee_{B \in \Delta}B$. Then, due to $\Gamma \vdash_\Int \bigwedge_{A \in \Gamma}A$, we get $\Gamma \vdash_\Rules \bigvee_{B \in \Delta}B$. And, since $\Rules \vdash \DP$, we have $\bigvee_{B \in \Delta}B \vdash_\Rules \Delta$. Thus, $\Gamma \vdash_\Rules \Delta$, that is, $\Rules \vdash \ruleR$.
\end{proof}

\subsection{$q$-Reductions.} \label{qrecuct}

\begin{definition}
With every m-rule $\ruleR \bydef \Gamma/\Delta$ and a variable $q$ we associate a rule
\begin{equation}
\ruleR^q \bydef \bigwedge_{A \in \Gamma}A \lor q/\bigvee_{B\in \Delta}B \lor q. \label{qreduction}
\end{equation} 
\end{definition}
The rule $\ruleR^q$ we call a $q$\textit{-reduction} of the rule $\ruleR$. If $\Rules$ is a set of m-rules and $q$ is a variable, we let $\Rules^q \means \set{r^q}{r \in \Rules}$.

\begin{prop} \label{admsubs} If an m-rule $\Gamma/\Delta$ is admissible for a given logic $\LogL$, then for every substitution $\sigma \in \Sigma$ the m-rule $\sigma(\Gamma)/\sigma(\Delta)$ is admissible for $\LogL$.
\end{prop}
\begin{proof} The proof follows immediately from the definition of admissible m-rule and from the observation that a composition of two substitutions is a substitution.
\end{proof}

\begin{prop} \label{admprop} Let a logic $\LogL$ enjoys DP and $q$ be a variable not occurring in an m-rule $\ruleR$. Then m-rule $\ruleR$ is admissible for $\LogL$ if and only if the rule $\ruleR^q$ is admissible for $\LogL$.
\end{prop}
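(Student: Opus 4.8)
The plan is to translate both admissibility statements into validity in a single algebra and then invoke Corollary~\ref{cseqprop}. Let $\class{V}$ be the variety of Heyting algebras corresponding to $\LogL$ and let $\Alg{F}_\class{V}$ be its free algebra. By the algebraic criterion for admissibility recorded above, the m-rule $\ruleR$ is admissible for $\LogL$ if and only if $\Alg{F}_\class{V} \models \ruleR$, and the (single-conclusion) rule $\ruleR^q$ is admissible for $\LogL$ if and only if $\Alg{F}_\class{V} \models \ruleR^q$. Hence it suffices to establish the equivalence $\Alg{F}_\class{V} \models \ruleR$ iff $\Alg{F}_\class{V} \models \ruleR^q$.

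The only place where the DP is needed is to see that $\Alg{F}_\class{V}$ is well-connected. Each element of $\Alg{F}_\class{V}$ is the class $[A]$ of some formula $A$, with $[A] = \one$ exactly when $A \in \LogL$. So if $[A] \lor [B] = [A \lor B] = \one$, then $A \lor B \in \LogL$; by the DP either $A \in \LogL$ or $B \in \LogL$, i.e. $[A] = \one$ or $[B] = \one$. Thus $\Alg{F}_\class{V}$ is well-connected.

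Now, since $q$ does not occur in $\ruleR$, I would apply Corollary~\ref{cseqprop} to the well-connected algebra $\Alg{F}_\class{V}$ to obtain $\Alg{F}_\class{V} \models \ruleR$ if and only if $\Alg{F}_\class{V} \models \ruleR^q$. Combined with the two instances of the admissibility criterion from the first paragraph, this gives precisely the desired equivalence. The conventions $\bigwedge_{A\in\emptyset}A = \top$ and $\bigvee_{B\in\emptyset}B = \bot$ built into $\ruleR^q$ make the argument go through uniformly when $\Gamma$ or $\Delta$ is empty, so no case distinction is required.

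I expect the only real care to be in the setup rather than in any computation: one must use the free-algebra criterion in both its m-rule and single-conclusion forms, and one must note that well-connectedness of $\Alg{F}_\class{V}$ --- stated in the excerpt only for the free algebras of all of $\Heyt$ --- holds for the free algebra of the variety of \emph{every} DP-logic, which is exactly the two-line argument above. A purely syntactic proof is also possible, but its forward direction would require relativizing a refuting substitution by $\lor\,\sigma(q)$, and the algebraic route avoids that bookkeeping entirely.
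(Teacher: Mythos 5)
Your proof is correct, but it takes a genuinely different route from the paper's. The paper argues purely syntactically: for the forward direction it applies the DP directly to the formula $\sigma(\bigwedge_{A\in\Gamma}A \lor q)$ and splits into the cases $\sigma(q)\in\LogL$ versus $\sigma(\bigwedge_{A\in\Gamma}A)\in\LogL$; for the converse it uses Proposition~\ref{admsubs} with the substitution $q\mapsto\bot$ and then the DP again to extract a single disjunct of $\bigvee_{B\in\Delta}\sigma(B)$. You instead route everything through the algebraic semantics: admissibility of $\ruleR$ and of $\ruleR^q$ are both read off as validity in the free algebra $\Alg{F}_\class{V}$ of countable rank, the DP is used once to show $\Alg{F}_\class{V}$ is well-connected (a fact the paper itself asserts and uses later, in the proof of Theorem~\ref{mthm}(b)), and then Corollary~\ref{cseqprop} does all the work. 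Both arguments are sound and roughly the same length once Proposition~\ref{seqprop} is granted. Your version buys uniformity --- both directions fall out of one application of Corollary~\ref{cseqprop}, and it makes visible that Proposition~\ref{admprop} is really the free-algebra instance of Proposition~\ref{seqprop} --- at the cost of leaning on the Lindenbaum--Tarski correspondence between admissibility and validity in $\Alg{F}_\class{V}$ (which presupposes closure of $\LogL$ under substitution). The paper's syntactic version is more elementary and self-contained, needing only the definition of admissibility, the DP, and the closure properties (M), (T) of $\vdash_\Int$. Your observation that the free algebra of the variety of \emph{any} DP-logic is well-connected (not just the free Heyting algebras mentioned in the background) is exactly the right point to flag, and your two-line argument for it is correct.
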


\begin{proof} Let $\ruleR \bydef \Gamma/\Delta$ be admissible for $\LogL$. We need to prove that for every substitution $\sigma \in \Sigma$, 
\begin{equation}
\text{if } \sigma(\bigwedge_{A \in \Gamma}A \lor q) \in \LogL \text{ then } \sigma(\bigvee_{B \in \Delta}B \lor q) \in \LogL. \label{admprop1}
\end{equation}

Indeed, if $\sigma(\bigwedge_{A \in \Gamma}A \lor q) \in \LogL$, by DP, one of the following holds 
\begin{itemize}
\item[(a)] $\sigma(\bigwedge_{A \in \Gamma}) \in \LogL$;
\item[(b)] $\sigma(q) \in \LogL$. 
\end{itemize}
In the case (b), $\sigma(q) \in \LogL$ and, clearly, $\sigma(\bigvee_{B \in \Delta}B \lor q) = \sigma(\bigvee_{B \in \Delta}B) \lor \sigma(q)) \in \LogL$.

In the case (a), $\sigma(\bigwedge_{A \in \Gamma}) \in \LogL$, hence, due to $\ruleR$ is admissible for $\LogL$, we have that $\sigma(B) \in \LogL$ for some $B \in \Delta$ and, hence, $\sigma(\bigwedge_{B \in \Delta}B) = \bigwedge_{B \in \Delta} \sigma(B) \in \LogL$. Therefore $\sigma(\bigwedge_{B \in \Delta}B  \lor q) \in \LogL$. 

Conversely, suppose that $\ruleR^q$ is admissible for $\LogL$. Recall that the variable $q$ is not occurring in $\Gamma , \Delta$, and let $\psi$ be a substitution such that $\psi: q \mapsto \bot$ and $\psi: p \mapsto p$ for all variables $p \neq q$. By virtue of Proposition \ref{admsubs}, the following rule, obtained from $\ruleR^q$ by applying $\psi$,
\begin{equation}   
\bigwedge_{A\in \Gamma}A \lor \bot/\bigvee_{B \in \Delta}B \lor \bot. \label{admprop2}
\end{equation}
is admissible for $\LogL$.

Assume that $\sigma$ is such a substitution that $\sigma(A) \in \LogL$ for all $A \in \Gamma$. Then, $\sigma(\bigwedge_{A \in \Gamma}A \lor \bot) \in \LogL$, and, due to rule \eqref{admprop2} is admissible for $\LogL$, we have
\[
\sigma(\bigvee_{B \in \Delta}B \lor \bot) \in \LogL.
\] 
Since $\sigma(\bigvee_{B \in \Delta}B \lor \bot) = \bigvee_{B \in \Delta}\sigma(B) \lor \bot$ and the right hand formula is equivalent in $\Int$ to $\bigvee_{B \in \Delta}\sigma(B)$, we have
\[
\bigvee_{B \in \Delta}\sigma(B) \in \LogL.
\] 
Due to logic $\LogL$ enjoys DP, for one of the formulas $B \in \Delta$ we have $\sigma(B) \in \LogL$, and this, by the definition of admissibility, means that the rule $\ruleR$ is admissible for $\LogL$. 
\end{proof}

\begin{cor} \label{admsubscor1} Let $\LogL$ be a logic with the DP. Then if a rule $\Gamma/\Delta$ is admissible for $\LogL$, so is the rule $\bigwedge_{A \in \Gamma}A/\bigvee_{B \in \Delta}B $.
\end{cor}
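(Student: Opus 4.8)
The plan is to derive Corollary~\ref{admsubscor1} as an almost immediate consequence of Proposition~\ref{admprop} together with Proposition~\ref{admsubs}. The corollary states that if $\Gamma/\Delta$ is admissible for a logic $\LogL$ with the DP, then so is $\ruleR^\circ \bydef \bigwedge_{A \in \Gamma}A/\bigvee_{B \in \Delta}B$. Observe that $\ruleR^\circ$ is exactly the rule $\ruleR^q$ with the variable $q$ replaced by $\bot$; that is, $\ruleR^\circ = \psi(\ruleR^q)$ for the substitution $\psi$ sending $q \mapsto \bot$ and fixing all other variables. This is the same $\psi$ already used in the converse direction of the proof of Proposition~\ref{admprop}, so the bridge between $\ruleR^q$ and $\ruleR^\circ$ is already available in the text.

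Concretely, I would proceed as follows. First, since $\Gamma/\Delta$ is admissible for $\LogL$ and $\LogL$ has the DP, choose a variable $q$ not occurring in $\ruleR$ and apply the forward direction of Proposition~\ref{admprop} to conclude that the rule $\ruleR^q \bydef \bigwedge_{A \in \Gamma}A \lor q/\bigvee_{B \in \Delta}B \lor q$ is admissible for $\LogL$. Second, apply Proposition~\ref{admsubs} to $\ruleR^q$ with the substitution $\psi: q \mapsto \bot$, $p \mapsto p$ (for $p \neq q$): this yields that $\psi(\ruleR^q) = \bigwedge_{A \in \Gamma}A \lor \bot/\bigvee_{B \in \Delta}B \lor \bot$ is admissible for $\LogL$. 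Third, note that over $\Int$ the formula $C \lor \bot$ is equivalent to $C$, so $\bigwedge_{A \in \Gamma}A \lor \bot$ and $\bigvee_{B \in \Delta}B \lor \bot$ are $\Int$-equivalent to $\bigwedge_{A \in \Gamma}A$ and $\bigvee_{B \in \Delta}B$ respectively; since admissibility is insensitive to replacing premises and conclusions by $\LogL$-provably (indeed $\Int$-provably) equivalent formulas, the rule $\bigwedge_{A \in \Gamma}A/\bigvee_{B \in \Delta}B$ is admissible for $\LogL$, which is precisely $\ruleR^\circ$.

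I do not expect a substantive obstacle here, since every ingredient has been assembled in the preceding propositions; the corollary is really just recording that the disjunct $q$ in $\ruleR^q$ can be specialized to $\bot$ and then absorbed. The only point requiring a line of care is the last step, where I must justify that passing from $C \lor \bot$ to $C$ preserves admissibility. This follows because $\sigma(C \lor \bot) \in \LogL$ iff $\sigma(C) \in \LogL$ for every substitution $\sigma$ (as $C \lor \bot$ and $C$ are interderivable in $\Int \subseteq \LogL$, and $\LogL$ is closed under the relevant equivalences), so the admissibility condition for the two rules is the same verbatim. Thus the proof amounts to: invoke Proposition~\ref{admprop} to pass to $\ruleR^q$, invoke Proposition~\ref{admsubs} with $\psi$ to specialize $q$ to $\bot$, and simplify the resulting $\bot$-disjuncts using $\Int$-equivalence.
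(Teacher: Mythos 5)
Your proposal is correct and follows exactly the route the paper intends: the paper's proof is simply ``Directly from Propositions \ref{admsubs} and \ref{admprop}'', and you have spelled out the same two-step argument (pass to $\ruleR^q$ via Proposition \ref{admprop}, specialize $q$ to $\bot$ via Proposition \ref{admsubs}, absorb the $\bot$-disjuncts by $\Int$-equivalence), which indeed mirrors the steps already carried out in the converse direction of the proof of Proposition \ref{admprop}.
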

\begin{proof} Directly from the propositions \ref{admsubs} and \ref{admprop}. \end{proof}

\textbf{Note.} It is not hard to see that one can prove Corollary \ref{admsubscor1} without restriction that $\LogL$ enjoys DP. 

A \textit{problem of m-admissibility} (of admissibility) for a logic $\LogL$ is a problem of recognizing by a given m-rule (by a given rule) $\ruleR$ whether $\ruleR$ is admissible for $\LogL$, i.e. whether $\ruleR \in Adm(\LogL)$ (respectively, whether $\ruleR \in Adm^1(\LogL)$). Thus, the problem of m-admissibility (of admissibility) for $\LogL$ is decidable if and only if the set $Adm(\LogL)$ (the set $Adm^1(\LogL)$) is recursive. Recall that two decision problems are \textit{equivalent}, if they are reducible to each other.

Since $Adm^1(\LogL) \subseteq Adm(\LogL)$ for every  $\LogL$ and for every m-rule $\ruleR$ we can effectively recognize whether $\ruleR$ has a single conclusion, or not, that is, we can effectively recognize whether $\ruleR \in Adm^1(\LogL)$, the decidability of the problem of m-admissibility yields the decidability of the problem of admissibility. In case when $\LogL$ enjoys the DP, the converse also holds.   

\begin{cor} \label{admsubscor2} For every logic $\LogL$ enjoying DP, the problems of m-admissibility and admissibility are equivalent. In other words, the set $Adm(\LogL)$ is recursive if and only if the set $Adm^1(\LogL)$ is recursive.
\end{cor}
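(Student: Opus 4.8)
The plan is to exhibit effective reductions in both directions, the nontrivial one resting entirely on Proposition \ref{admprop}. The forward direction---decidability of m-admissibility yielding decidability of admissibility---requires no hypothesis on $\LogL$ and was already sketched in the discussion preceding the statement: a single-conclusion rule is in particular an m-rule, so $Adm^1(\LogL)$ is exactly the intersection of $Adm(\LogL)$ with the set of those m-rules whose conclusion set is a singleton. Since for any given m-rule $\ruleR$ we can effectively recognize whether it has a single conclusion, it follows that if $Adm(\LogL)$ is recursive, then membership in $Adm^1(\LogL)$ is decided by first testing that $\ruleR$ has one conclusion and then testing $\ruleR \in Adm(\LogL)$. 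Hence $Adm^1(\LogL)$ is recursive.

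For the converse---where the DP is used---I would present the assignment $\ruleR \mapsto \ruleR^q$ as a computable many-one reduction of the m-admissibility problem to the admissibility problem. Given an arbitrary m-rule $\ruleR \bydef \Gamma/\Delta$, one effectively selects a variable $q \in \Vars$ not occurring in $\ruleR$ (say, the first variable of $\Vars$ absent from $\Gamma \cup \Delta$) and forms the single-conclusion rule $\ruleR^q \bydef \bigwedge_{A \in \Gamma}A \lor q/\bigvee_{B \in \Delta}B \lor q$. Both the choice of $q$ and the construction of $\ruleR^q$ are carried out by a terminating procedure, so $\ruleR \mapsto \ruleR^q$ is computable.

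The correctness of this reduction is then immediate from Proposition \ref{admprop}: since $\LogL$ enjoys the DP and $q$ does not occur in $\ruleR$, we have $\ruleR \in Adm(\LogL)$ if and only if $\ruleR^q \in Adm^1(\LogL)$. Consequently, if $Adm^1(\LogL)$ is recursive, then to decide whether $\ruleR \in Adm(\LogL)$ it suffices to compute $\ruleR^q$ and test its membership in $Adm^1(\LogL)$, so $Adm(\LogL)$ is recursive. Combining the two directions yields the asserted equivalence of the decision problems. The only genuinely nonroutine ingredient, namely the admissibility-preserving passage from $\ruleR$ to its single-conclusion $q$-reduction, is already discharged by Proposition \ref{admprop}; what remains is bookkeeping about effectiveness, and the single point deserving care is that Proposition \ref{admprop} applies uniformly to the particular fresh $q$ we compute---which it does, as that proposition holds for \emph{any} variable not occurring in $\ruleR$.
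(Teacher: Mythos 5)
Your proposal is correct and follows essentially the same route as the paper: the easy direction is exactly the observation made in the paragraph preceding the corollary (single-conclusion rules are recognizable m-rules), and the converse is the effective reduction $\ruleR \mapsto \ruleR^q$ justified by Proposition \ref{admprop}. Your added bookkeeping about the computability of choosing a fresh $q$ and forming $\ruleR^q$ is a harmless elaboration of what the paper leaves implicit.
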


For instance, it is well known that $\Int$ enjoys the DP, hence from decidability of the admissibility of rules for $\Int$ (see \cite{Rybakov_Criterion_Adm_1984}) it follows that the problem of m-admissibility for $\Int$ is decidable (in algebraic terms, that the universal theory of the free Heyting algebras is decidable \cite[Theorem 10]{Rybakov_Criterion_Adm_1984}) .\\

\begin{remark} It is known from \cite{Prucnal_Structural_1976} that Medvedev's Logic $\ML$ is structurally complete and enjoys DP. From Proposition \ref{admprop} it immediately follows that the rule $\DP$ forms an m-basis of $\ML$. It is not hard to see that  m-rule $\DP$ is not derivable in $\ML$. In fact, for any intermediate logic $\LogL$ m-rule $\DP$ is not derivable from rules admissible for $\LogL$: all rules admissible for $\LogL$ are valid in the four-element Boolean algebra, while m-rule $\DP$ is not. 
\end{remark}

\subsection{Reduction of basis}

In Section \ref{redadm} we saw that for the logics with the DP, the admissibility of m-rule and its reduction are equivalent. In this section we will prove that the m-rules and their $q$-reductions are related even closer. More precisely, we will prove that using any basis of m-rules, one can effectively construct a basis of rules, and, using any basis of rules, one can construct a basis of m-rules.

\begin{theorem} \label{mthm} Let $\LogL$ be a logic enjoying DP. Then the following holds
\begin{itemize}
\item[(a)] If rules $\Rules$ form an s-basis, then m-rules $\Rules \cup  \{\DP\}$ form an m-basis. 
\item[(b)] If a set of m-rules $\Rules$ forms an m-basis and $q$ is a variable not occurring in any rule from $\Rules$, then the rules $\Rules^q$ form an s-basis.
\end{itemize}
\end{theorem}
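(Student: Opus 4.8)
The plan is to prove the two halves separately. Part (a) is elementary and relies only on the single‑conclusion reduction $\ruleR^\circ$ together with the interderivability furnished by the $\DP$; part (b) is the substantial half, and I expect the passage from well‑connected algebras to arbitrary algebras to be the real difficulty. For (a), let $\ruleR \bydef \Gamma/\Delta$ be any m‑rule admissible for $\LogL$. Since $\LogL$ has the DP, Corollary \ref{admsubscor1} tells me that the reduction $\ruleR^\circ = \bigwedge_{A\in\Gamma}A/\bigvee_{B\in\Delta}B$ is again admissible, and it is single‑conclusion. Because $\Rules$ is an s‑basis, every admissible single‑conclusion rule, in particular $\ruleR^\circ$, is derivable from $\Rules$, so $\Rules \vdash \ruleR^\circ$ and hence $\Rules\cup\{\DP\} \vdash \ruleR^\circ$. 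As $\DP$ is trivially derivable from $\Rules\cup\{\DP\}$, Proposition \ref{propdp} applies to the rule set $\Rules\cup\{\DP\}$ and converts $\Rules\cup\{\DP\}\vdash\ruleR^\circ$ into $\Rules\cup\{\DP\}\vdash\ruleR$. Since $\ruleR$ was an arbitrary admissible m‑rule, $\Rules\cup\{\DP\}$ is an m‑basis; this direction should be routine.

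For (b) I would move to the algebraic side through \eqref{algrulent}. Fix a single‑conclusion rule $\ruleR'$ admissible for $\LogL$; the goal is $\Rules^q \vdash \ruleR'$, that is, every Heyting algebra validating $\Rules^q$ validates $\ruleR'$. Two observations are available immediately. First, since $\Rules$ is an m‑basis and $\ruleR'$ is an admissible m‑rule, $\Rules \vdash \ruleR'$, so by \eqref{algrulent} every algebra validating $\Rules$ validates $\ruleR'$. Second, by Corollary \ref{cseqprop}, for a \emph{well‑connected} algebra $\Alg{A}$ and any $\ruleR\in\Rules$ one has $\Alg{A}\models\ruleR$ iff $\Alg{A}\models\ruleR^q$; hence a well‑connected algebra validates $\Rules^q$ exactly when it validates $\Rules$. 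Combining the two, every well‑connected algebra that validates $\Rules^q$ validates $\ruleR'$. Note that $\Rules^q$ and $\ruleR'$ are single‑conclusion, so $\qvar(\Rules^q)$ is a quasivariety and what I must establish is $\qvar(\Rules^q)\models\ruleR'$.

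The remaining, and main, obstacle is to discharge the well‑connectedness hypothesis: I know $\ruleR'$ holds in every well‑connected model of $\Rules^q$ and want it in every model. The natural attempt, decomposing an arbitrary model subdirectly into its subdirectly irreducible (hence well‑connected) quotients, fails, because a quasi‑identity such as $\ruleR^q$ is not preserved under homomorphic images — indeed a well‑connected quotient of a model of $\Rules^q$ need not validate $\Rules^q$, so the subdirect factors may leave the quasivariety. The way I would actually close the gap is to show that $\qvar(\Rules^q)$ is generated, as a quasivariety, by its well‑connected members $\class{W}$: granting $\qvar(\Rules^q)=\qvar(\class{W})$, the single‑conclusion rule $\ruleR'$, being valid throughout $\class{W}$, is valid throughout the quasivariety generated by $\class{W}$, and therefore throughout $\qvar(\Rules^q)$, which is exactly $\Rules^q \vdash \ruleR'$. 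This generation statement is precisely the point not supplied by the material developed so far — it is special to the disjunctive shape of the $q$‑reductions — and it is here that I would invoke the main theorem of \cite{Citkin_2011}, whose function is to guarantee that single‑conclusion derivability from a set of $q$‑reductions may be tested on well‑connected algebras alone. With that input the three facts above combine to show that $\Rules^q$ is an s‑basis, completing (b).
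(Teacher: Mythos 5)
Your proposal is correct and follows essentially the same route as the paper: part (a) via Corollary \ref{admsubscor1} and Proposition \ref{propdp}, and part (b) via the $q$-reduction equivalence on well-connected algebras (Proposition \ref{seqprop}/Corollary \ref{cseqprop}) combined with the theorem of \cite{Citkin_2011} that $\qvar(\Rules^q)$ is generated by its well-connected members. The only difference is cosmetic: the paper packages (b) as the quasivariety identity $\qvar(\Rules^q)=\qvar(\Alg{F})$ for the countably generated free algebra $\Alg{F}$, whereas you argue rule-by-rule for an arbitrary admissible $\ruleR'$; the key lemma and the logical content are identical.
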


\subsection*{Proof of (a)} Suppose $\Rules$ is a basis for $\LogL$. We need to prove that every rule $\Gamma/\Delta \in Adm(\LogL)$ is derived from $\Rules \cup \DP$. So, we need to prove that for every admissible m-rule $\Gamma/\Delta$ we have $\Gamma \vdash_{\Rules \cup \DP} \Delta$.

Indeed, if $\ruleR \bydef \Gamma/\Delta$ is an admissible m-rule, by Corollary \ref{admsubscor1}, the rule $\ruleR^\circ = \bigwedge_{A \in \Gamma}A/\bigvee_{B \in \Delta}B$ is admissible for $\LogL$. By our assumption,  $\Rules$ is a basis, hence, $\Rules \vdash \ruleR^\circ$, that is, 
\begin{equation}
\bigwedge_{A \in \Gamma}A \vdash_\Rules \bigvee_{B \in \Delta}B. \label{mthm1_1}
\end{equation}

Note, that the following holds for $\vdash_\Int$ 
\begin{equation}
 \Gamma \vdash_\Int \bigwedge_{A \in \Gamma}A. \label{mthm1_2}
\end{equation}
Due to $\vdash_\Int \subseteq \vdash_\Rules$, from \eqref{mthm1_2} we have
\begin{equation}
 \Gamma \vdash_\Rules \bigwedge_{A \in \Gamma}A. \label{mthm1_3}
\end{equation}
Recall, that $\vdash_\Rules$ is closed under (T), hence, from \eqref{mthm1_3} and \eqref{mthm1_1} we have
\begin{equation}
 \Gamma \vdash_\Rules \bigvee_{B \in \Delta}B, \label{mthm1_4}
\end{equation}
and, therefore,
\begin{equation}
 \Gamma \vdash_{\Rules \cup \DP} \bigvee_{B \in \Delta}B. \label{mthm1_5}
\end{equation}
Next, we apply Proposition \ref{propdp} and we obtain
\begin{equation}
 \Gamma \vdash_{\Rules \cup \DP} \Delta,
\end{equation}
i.e. $\Rules \cup \DP$ forms a basis.\\

\subsection*{Proof of (b)} Suppose $\Rules$ is a basis of admissible m-rules and $q$ is a variable not occurring in the rules from $\Rules$. We need to prove that the set $\Rules^q$ forms a basis of admissible rules. For this, we will demonstrate that $\class{Q}= \qvar(\Alg{F})$, where $\class{Q} \bydef \qvar(\Rules^q)$ and $\Alg{F}$ is a free algebra of countable rank of a variety $\var(\LogL)$.

Let $\Alg{F}$ be a free algebra of $\var(\LogL)$. Since $\LogL$ enjoys DP, $\Alg{F}$ is well-connected. Due to rules $\Rules$ are admissible for $\LogL$, the rules from $\Rules$ are valid in $\Alg{F}$. Hence, by Proposition \ref{seqprop}, all rules from $\Rules^q$ are valid in $\Alg{F}$, that is, $\Alg{F} \in \class{Q}$. Therefore, $\qvar(\Alg{F}) \subseteq \class{Q} $, and we need only to prove that $\qvar(\Alg{F}) \supseteq \class{Q}$.

For contradiction: assume that  $\qvar(\Alg{F}) \subset \class{Q}$. Then there is an algebra $\Alg{A} \in \class{Q} \setminus \qvar(\Alg{F})$ in which all rules from $\Rules^q$ are valid. By virtue of \cite[Theorem 1]{Citkin_2011}, the quasivariety $\class{Q}$ is generated by its well-connected members. Thus, we can assume that $\Alg{A}$ is well-connected. So, $\Alg{A}$ is a well-connected algebra in which all rules from $\Rules^q$ are valid. Hence, by Proposition \ref{seqprop}, all m-rules from $\Rules$ are valid in $\Alg{A}$, hence, $\Alg{A} \in \class{U}$, where $\class{U} = \univ(\Rules)$ is a universal class defined by all rules from $\Rules$. Recall, that $\Rules$ forms an m-basis and, therefore, $\class{U} = \univ(\Alg{F}) \subseteq \qvar(\Alg{F})$. Thus,
\[
\Alg{A} \in \class{U} \subseteq \qvar(\Alg{F}),
\]
and this contradicts that $\Alg{A} \in \class{Q} \setminus \qvar(\Alg{F})$.

\begin{cor} Let $\LogL$ be a logic with the DP. Then $\LogL$ has a finite (recursive, recursively enumerable) s-basis if and only if $\LogL$ has a finite (recursive, recursively enumerable) m-basis.
\end{cor}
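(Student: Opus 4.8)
The plan is to obtain both directions of the equivalence directly from Theorem~\ref{mthm}, checking in each case that the construction carries a basis of the prescribed complexity (finite, recursive, or recursively enumerable) to a basis of the same complexity. For the passage from s-bases to m-bases, suppose $\Rules$ is an s-basis of the given complexity. By Theorem~\ref{mthm}(a) the set $\Rules \cup \{\DP\}$ is an m-basis, and since it arises from $\Rules$ by adjoining a single fixed rule, it is finite whenever $\Rules$ is finite and recursive (recursively enumerable) whenever $\Rules$ is recursive (recursively enumerable). This settles one implication with no difficulty.

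For the converse I would start from an m-basis $\Rules$ of the given complexity and apply Theorem~\ref{mthm}(b): once a variable $q$ occurring in no rule of $\Rules$ has been fixed, $\Rules^q = \{\ruleR^q \mid \ruleR \in \Rules\}$ is an s-basis. The complexity is preserved because both $\ruleR \mapsto \ruleR^q$ and its inverse (stripping the disjunct $q$ and decomposing the resulting conjunction and disjunction back into the sets $\Gamma,\Delta$) are effective, so $\Rules^q$ is recursive whenever $\Rules$ is, and it is recursively enumerable whenever $\Rules$ is, being the computable image of an r.e.\ set. In the finite case a suitable $q$ exists trivially, since a finite $\Rules$ mentions only finitely many variables.

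The one point that needs genuine care—and the step I expect to be the main obstacle—is the existence of such a $q$ when $\Rules$ is infinite, since a recursive or recursively enumerable m-basis may collectively exhaust all variables. I would remove this obstacle by a harmless renaming: fix $q = p_0$ and let $\tau$ be the injective substitution $p_i \mapsto p_{i+1}$. The key observation is that validity of a rule in a Heyting algebra is invariant under injective renaming of variables, so $\Alg{A} \models \Rules$ iff $\Alg{A} \models \tau(\Rules)$ for every $\Alg{A} \in \Heyt$; combined with the algebraic characterization \eqref{algrulent} this yields $\Rules \vdash \ruleR$ iff $\tau(\Rules) \vdash \ruleR$ for every m-rule $\ruleR$. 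Hence $\tau(\Rules)$ derives exactly the same admissible m-rules as $\Rules$ and is again an m-basis (its rules are admissible by Proposition~\ref{admsubs}); it has the same complexity as $\Rules$ because $\tau$ is computable and injective; and by construction $q = p_0$ occurs in no rule of $\tau(\Rules)$. Applying the previous paragraph to $\tau(\Rules)$ then produces an s-basis of the required complexity, which completes the argument.
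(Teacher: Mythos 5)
Your proposal is correct and follows the route the paper intends: the corollary is stated as an immediate consequence of Theorem~\ref{mthm}, with the observation that adjoining the single rule $\DP$ and the effective map $\ruleR \mapsto \ruleR^q$ both preserve finiteness, recursiveness, and recursive enumerability. Your renaming argument for securing a fresh variable $q$ when an infinite basis exhausts all variables is a legitimate extra detail that the paper leaves tacit, and it is handled correctly via the invariance of validity (and hence of derivability, by \eqref{algrulent}) under injective substitution of variables.
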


For example, since $\Int$ does not have a finite basis of admissible rules (see \cite[Corollary 2]{Rybakov_Bases_Adm_1985}), $\Int$ does not have a finite basis of admissible m-rules too \cite[Theorem 9]{Rybakov_Bases_Adm_1985}.

\begin{cor} If $\LogL$ is a logic with the DP and $\Rules$ is an s-basis, then $\Rules^q$ is an s-basis too. In other words, every intermediate logic with the DP has an s-basis consisting of $q$-extended rules.
\end{cor}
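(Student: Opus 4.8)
The plan is to derive this corollary by chaining the two halves of Theorem \ref{mthm} and then discarding a redundant rule, so that essentially no new machinery is needed. Since $\LogL$ enjoys the DP and $\Rules$ is by hypothesis an s-basis, part (a) of Theorem \ref{mthm} immediately gives that $\Rules \cup \{\DP\}$ is an m-basis; this turns the single-conclusion basis into a multiple-conclusion one at the cost of adjoining the disjunction-property rule.

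Next I would feed this m-basis into part (b) of Theorem \ref{mthm}. Choosing the variable $q$ so that it occurs neither in any rule of $\Rules$ nor in $\DP$ (always arrangeable after a harmless renaming of $\DP$), part (b) yields that the $q$-reductions $(\Rules \cup \{\DP\})^q$ form an s-basis. By the definition of $q$-reduction this set is precisely $\Rules^q \cup \{\DP^q\}$, which is one rule larger than the set $\Rules^q$ we actually want.

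The closing step is to show that the surplus rule $\DP^q$ is redundant. Writing $\DP$ as $p \lor p'/p, p'$, its single premise is $p \lor p'$ and the disjunction of its two conclusions is again $p \lor p'$; hence $\DP^q$ is the rule $(p \lor p') \lor q/(p \lor p') \lor q$, whose premise and conclusion coincide. Such a rule is derivable in every m-consequence directly from reflexivity (R), so $\Rules^q \vdash \DP^q$. Consequently every rule derivable from $\Rules^q \cup \{\DP^q\}$ is already derivable from $\Rules^q$, and the s-basis property transfers from the larger set to $\Rules^q$. The second sentence of the statement then follows by applying the result to the trivial s-basis $\Rules \bydef \Adms(\LogL)$: its $q$-reduction is an s-basis consisting entirely of $q$-extended rules.

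I do not anticipate a serious obstacle, since the substantive content is carried entirely by Theorem \ref{mthm}. The only points that need care are the bookkeeping of variable names, namely guaranteeing that the fresh $q$ clashes neither with $\Rules$ nor with the variables of $\DP$, and the observation that makes the argument close up cleanly: the $q$-reduction of $\DP$ degenerates into a reflexive rule and can therefore be dropped from the basis without loss.
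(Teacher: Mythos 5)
Your argument is correct and is essentially the derivation the paper intends: the corollary is stated there as an immediate consequence of Theorem \ref{mthm}, obtained exactly as you do by chaining parts (a) and (b) and then discarding $\DP^q$, which degenerates to a rule of the form $A/A$ and is derivable by (R). The only point worth making explicit is that for the second sentence you must first replace $\Adms(\LogL)$ by a variable-renamed copy avoiding $q$ (no single variable is fresh for the set of \emph{all} admissible rules), but this is covered by the renaming you already flag.
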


The bases consisting of $q$-reductions of rules also have the following important property.

\begin{theorem} \label{thmind} Let $\LogL$ be a logic with the DP. If $\Rules^q$ is an independent s-basis, then $\Rules^q \cup \DP$ is an independent m-basis. 
\end{theorem}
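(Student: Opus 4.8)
The plan is to read off the m-basis claim from part~(a) and then concentrate entirely on independence. Since $\Rules^q$ is in particular an s-basis, Theorem~\ref{mthm}(a) immediately gives that $\Rules^q \cup \{\DP\}$ is an m-basis, so the only real content is independence. Recalling that a basis is \emph{independent} when no member is derivable from the rest, I would verify for each $\rho \in \Rules^q \cup \{\DP\}$ that $(\Rules^q \cup \{\DP\}) \setminus \{\rho\} \not\vdash \rho$, treating separately the member $\DP$ and the members $\ruleR^q \in \Rules^q$.

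For $\rho = \DP$ I must show $\Rules^q \not\vdash \DP$. Here I would use the argument recorded in the Remark following Corollary~\ref{admsubscor2}: the rules $\Rules^q$, being an s-basis, are admissible for $\LogL$ and hence valid in the four-element Boolean algebra, whereas $\DP$ fails there (take $p$ an atom and $q$ its complement, so $p \lor q = \one$ but neither disjunct is $\one$). By the algebraic criterion \eqref{algrulent}, $\Rules^q \vdash \DP$ would force that algebra to validate $\DP$ — a contradiction — so $\DP$ is independent.

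For $\rho = \ruleR^q$ with $\ruleR^q \in \Rules^q$ I must show $(\Rules^q \setminus \{\ruleR^q\}) \cup \{\DP\} \not\vdash \ruleR^q$. Since $\Rules^q$ is an \emph{independent} s-basis, $\Rules^q \setminus \{\ruleR^q\} \not\vdash \ruleR^q$, so by \eqref{algrulent} the quasivariety $\class{Q} \bydef \qvar(\Rules^q \setminus \{\ruleR^q\})$ fails $\ruleR^q$. The crux is to turn this into a \emph{well-connected} refuting algebra: as $\Rules^q \setminus \{\ruleR^q\}$ is again a set of $q$-reductions, \cite[Theorem~1]{Citkin_2011} shows $\class{Q}$ is generated by its well-connected members; and since validity of the single-conclusion rule $\ruleR^q$ is preserved under subalgebras, products and ultraproducts, were every well-connected member to validate $\ruleR^q$ then all of $\class{Q}$ would, a contradiction. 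Hence some well-connected $\Alg{W} \in \class{Q}$ refutes $\ruleR^q$. Being well-connected, $\Alg{W} \models \DP$, so $\Alg{W}$ validates $(\Rules^q \setminus \{\ruleR^q\}) \cup \{\DP\}$ while refuting $\ruleR^q$; by \eqref{algrulent} this gives the required non-derivability.

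I expect the single genuine obstacle to be this promotion step: once $\DP$ is admitted, non-derivability becomes a statement about \emph{m}-consequence, and the refuting algebra handed over by independence of $\Rules^q$ need not validate $\DP$. The reduction to well-connected members via \cite[Theorem~1]{Citkin_2011} — the very tool already used for Theorem~\ref{mthm}(b) — is exactly what lets the multiple-conclusion rule $\DP$ ride along for free, since well-connected algebras validate $\DP$ automatically. The remaining manipulations are routine applications of the algebraic criterion \eqref{algrulent}.
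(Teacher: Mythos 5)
Your proposal is correct and follows essentially the same route as the paper: independence of $\DP$ is obtained by exhibiting a direct square that validates all the single-conclusion rules of $\Rules^q$ but refutes $\DP$ (the paper uses $\Alg{F}_{\class{V}}^2$ where you use the four-element Boolean algebra), and independence of each $\ruleR^q$ is obtained by producing a \emph{well-connected} algebra validating $\Rules^q\setminus\{\ruleR^q\}$ and refuting $\ruleR^q$, on which $\DP$ holds automatically. The only cosmetic difference is that the paper extracts this well-connected witness via the subdirect decomposition of \cite[Lemma 1]{Citkin_2011} applied to the refuting algebra, whereas you invoke \cite[Theorem 1]{Citkin_2011} together with preservation of quasi-identities under the quasivariety operators; both are instances of the same tool.
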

\begin{proof}
Assume that $\Rules^q$ is an independent basis. First, we will prove that $\Rules^q \nvdash \DP$. Indeed, since $\LogL$ is an intermediate logic and, therefore, $\LogL$ is consistent, the corresponding variety $\class{V} \bydef \var(\LogL)$ is not trivial. Hence, its free algebra $\Alg{F}_\class{V}$ is not degenerate. Since all rules from $\Rules^q$ are admissible for $\LogL$, we have $\Alg{F}_\class{V} \models \Rules$. Therefore, by Proposition \ref{propprod}, we get $\Alg{F}_\class{V}^2 \models \Rules^q$. But $\Alg{F}_\class{V}^2 \not\models \DP$.

Now, let us assume that $\ruleR^q \in \Rules^q$. We need to prove that $\Rules_0^q \cup \DP \nvdash \ruleR^q$, where $\Rules_0^q \bydef \Rules^q \setminus \{\ruleR^q\}$. Let us recall that basis $\Rules^q$ is independent, that is, $\Rules_0^q \nvdash \ruleR^q$. Hence, there is an algebra $\Alg{A}$ such that $\Alg{A} \models \Rules_0^q$ and $\Alg{A} \not\models \ruleR^q$. By \cite[Lemma 1]{Citkin_2011}, $\Alg{A}$ is a subdirect product of well-connected algebras $\Alg{A}_i,i \in I$ in which all rules $\Rules_0$ are valid.  Let $\class{A} \bydef \{\Alg{A}_i,i \in I\}$. Due to all algebras from $\class{A}$ being well-connected, $\class{A} \models \Rules_0$ yields $\class{A} \models \Rules_0^q$. Since $\Alg{A} \not\models \ruleR^q$, there is an algebra $\Alg{A}_j \in \class{A}$ such that $\Alg{A}_j \not\models \ruleR^q$. Now, let us observe that the rule $\DP$ is valid in every well-connected algebra, hence $\Alg{A}_j \models \Rules_0^q \cup \DP$, but $\Alg{A}_j \not\models \ruleR^q$. And this completes the proof of the theorem.
 \end{proof}

\begin{example} The m-bases for Gabbay-de Jongh logics $\Dn$ have been constructed in \cite{Goudsmit_Iemhoff_Unification_2014}: the m-rules $\mathsf{J}_i, i \leq n+1$ (see \cite[Definition 17]{Goudsmit_Iemhoff_Unification_2014}) form a basis of m-rules of $\Dn$ for all $n$. By Theorem \ref{mthm}, $\mathsf{J}_j^q, j \leq n+1$ is a basis of admissible rules of logic $\Dn$ for all $n$.
\end{example}

\section{Beyond Intermediate Logics.}

Let us note that all proofs are based either on general properties of quasivarieties and universal classes, or on the results from \cite{Citkin_2011}. It was observed in \cite[Section 4]{Citkin_2011} that all results from \cite{Citkin_2011} can be extended to the logics for which there is a formula $R(p)$ such that $R(A) \lor R(B) \in \LogL$ yields $R(A) \in \LogL$ or $R(B) \in \LogL$, that is to the logics enjoying the DP relative to some formula $R(p)$. In this case, corresponding algebraic model $\Alg{A}$ is called the well-connected if $R(\Alg{a}) \lor R(\Alg{b}) = \one_\Alg{A}$ entails $R(\Alg{a} = \one_\Alg{A})$ or $R(\Alg{a} = \one_\Alg{A})$.

Thus, Theorems \ref{mthm} and \ref{thmind} hold for the following classes of logics

\begin{enumerate}
\item positive logic and its extensions (regarding admissibility for positive and Johansson logoics see \cite{Odintsov_Rybakov_Unification_2013});
\item minimal (Johansson) \cite{Kleene_Intro} logic and its extensions;
\item logic $\KM$ (see \cite{Muravitsky_Logic_2014}) and its extensions 
\item $\KF$ and its normal extensions;
\item intuitionistic modal logic MIPC (e.g. \cite{G_Bez_MIPC_1}) and its normal extensions;
\item n-transitive logics (e.g. \cite{Chagrov_Zakh}).
\end{enumerate} 

For instance, for logics $\KF, \SF, \Grz$ or $\GL$ one can take the m-basis (see \cite{Jerabek_Admissible_2005}) and convert it into a basis of rules (see \cite[Theorem 6.4.]{Jerabek_Admissible_2005} where the same reduction as in Theorem \ref{mthm} was used). Or one can take a basis of admissible rules of $\SF$ (see \cite{Rybakov_Construction_2001}), and convert it into am m-basis. Let us note that the proofs in \cite{Jerabek_Admissible_2005} and \cite{Rybakov_Construction_2001} are based on certain properties of Kripke models. On the other hand, an m-basis for logic $\GL$ can be obtained simply by extending the s-basis constructed in \cite{Fedorishin_Explicit_2007} by m-rule $\Box_0 p \lor \Box_0 q  /\Box_0 p ,\Box_0 q$, where $\Box_0 \alpha \means  \Box \alpha \land \alpha$. Taking into account that $\GL$ does not have finite s-basis (see \cite[Theorem 17]{Rybakov_Admissibility_Equations_1990}), we can conclude that $\GL$ has no finite m-basis.



\end{document}